\theoremstyle{plain}
\newtheorem{thm}{Theorem}
\newtheorem{cor}[thm]{Corollary}
\theoremstyle{definition}
\theoremstyle{remark}
 \font\cyr=wncyr10
 \newcommand{\nc}{\newcommand}
\nc{\per}[1]{\underset{#1}{\boldsymbol \pi}\,}
 \nc{\MT}{{\rm MT}}
 \nc{\wht}{{\widehat}}
 \nc{\bwg}{{\bigwedge}}
 \nc{\mmu}{{\boldsymbol{\mu}}}
 \nc{\mal}{{{\scriptstyle \maltese}}}
 \nc{\fA}{{\mathfrak A}}
 \nc{\HH}{{\mathfrak H}}
 \nc{\ra}{\rightarrow}
 \nc{\ors}{{\vec s\,}}
 \nc{\os}{{\overset}}
 \nc{\G}{{\mathbb G}}
 \nc{\Z}{{\mathbb Z}}
 \nc{\R}{{\mathbb R}}
 \nc{\N}{{\mathbb N}}
 \nc{\ZN}{{\mathbb Z_{\ge 0}}}
 \nc{\Q}{{\mathbb Q}}
 \nc{\C}{{\mathbb C}}
 \nc{\Cnn}{{\mathbb C}_{\ge 0}}
 \nc{\Cp}{{\mathbb C}_{>0}}
 \nc{\MPV}{{\mathcal{MPV}}}
 \nc{\tB}{{\tilde B}}
 \nc{\Li}{{\rm Li}}
 \nc{\suf}{{\ast\,}}
 \nc{\sufq}{{\ast_q\,}}
 \nc{\gam}{{\gamma}}
 \nc{\gG}{{\Gamma}}
 \nc{\om}{{\omega}}
 \nc{\vep}{{\varepsilon}}
 \nc{\ga}{{\alpha}}
 \nc{\gl}{{\lambda}}
 \nc{\gb}{{\beta}}
 \nc{\gd}{{\delta}}
 \nc{\gs}{{\sigma}}
 \nc{\gS}{{\Sigma}}
 \nc{\gk}{{\kappa}}
  \nc{\gz}{{\zeta}}
 \nc{\tgz}{{\tilde{\zeta}}}
 \nc{\gO}{{\Omega}}
 \nc{\sif}{{\mathcal S}}
 \nc{\gt}{{\tau}}
 \nc{\Lra}{\Longrightarrow}
 \nc{\lra}{\longrightarrow}
 \nc{\fS}{{\mathfrak S}}
 \nc{\DD}{{\mathfrak D}}
 \nc{\Llra}{\Longleftrightarrow}
 \nc{\ol}{\overline}
 \nc{\lms}{\longmapsto}
 \nc{\cv}{{{\mathsf c}{\mathsf v}}}
 \nc{\zq}{{\zeta_q}}
 \nc\qup{{q\uparrow 1}}
 \nc{\us}{\underset}
 \nc{\tn}{{\tilde{n}}}
 \nc{\gD}{{\Delta}}
 \nc{\bi}{{\bf i}}
 \nc{\bfone}{{\bf 1}}
 \nc{\bfa}{{\bf a}}
 \nc{\bfb}{{\bf b}}
 \nc{\bfc}{{\bf c}}
 \nc{\bfd}{{\bf d}}
 \nc{\bfe}{{\bf e}}
 \nc{\bff}{{\bf f}}
 \nc{\bfg}{{\bf g}}
 \nc{\bfi}{{\bf i}}
 \nc{\bfj}{{\bf j}}
 \nc{\bfn}{{\bf n}}
 \nc{\bfl}{{\bf l}}
 \nc{\bfk}{{\bf k}}
 \nc{\bfm}{{\bf m}}
 \nc{\bfo}{{\bf o}}
 \nc{\bfp}{{\bf p}}
 \nc{\bfq}{{\bf q}}
 \nc{\bfr}{{\bf r}}
 \nc{\bfs}{{\bf s}}
 \nc{\bft}{{\bf t}}
 \nc{\bfu}{{\bf u}}
 \nc{\bfv}{{\bf v}}
 \nc{\bfw}{{\bf w}}
 \nc{\bfx}{{\bf x}}
 \nc{\bfB}{{\bf B}}
 \nc{\bfP}{{\bf P}}
 \nc{\bfQ}{{\bf Q}}
 \nc{\bfY}{{\bf Y}}
 \nc{\bfgb}{{\boldsymbol \gb}}
 \nc{\bfga}{{\boldsymbol \ga}}
 \nc{\bfrho}{{\boldsymbol \rho}}
 \nc{\bfchi}{{\boldsymbol \chi}}
 \nc{\QX}{{\Q\langle \bfX\rangle}}
 \nc{\QY}{{\Q\langle \bfY\rangle}}
 \nc{\CX}{{\C\langle \bfX\rangle}}
 \nc{\CY}{{\C\langle \bfY\rangle}}
 \nc{\QXX}{{\Q\langle\!\langle \bfX\rangle\!\rangle}}
 \nc{\QYY}{{\Q\langle\!\langle \bfY\rangle\!\rangle}}
 \nc{\CXX}{{\C\langle\!\langle \bfX\rangle\!\rangle}}
 \nc{\CYY}{{\C\langle\!\langle \bfY\rangle\!\rangle}}
 \nc{\bbA}{{\mathbb A}}
 \nc{\bbB}{{\mathbb B}}
 \nc{\bbC}{{\mathbb C}}
 \nc{\bbD}{{\mathbb D}}
 \nc{\bbE}{{\mathbb E}}
 \nc{\bbF}{{\mathbb F}}
 \nc{\bbG}{{\mathbb G}}
 \nc{\bbH}{{\mathbb H}}
 \nc{\bbI}{{\mathbb I}}
 \nc{\bbJ}{{\mathbb J}}
 \nc{\bbK}{{\mathbb K}}
 \nc{\bbL}{{\mathbb L}}
 \nc{\bbM}{{\mathbb M}}
 \nc{\bbN}{{\mathbb N}}
 \nc{\bbO}{{\mathbb O}}
 \nc{\bbP}{{\mathbb P}}
 \nc{\bbQ}{{\mathbb Q}}
 \nc{\bbR}{{\mathbb R}}
 \nc{\bbS}{{\mathbb S}}
 \nc{\bbT}{{\mathbb T}}
 \nc{\bbU}{{\mathbb U}}
 \nc{\bbV}{{\mathbb V}}
 \nc{\bbW}{{\mathbb W}}
 \nc{\bbX}{{\mathbb X}}
 \nc{\bbY}{{\mathbb Y}}
 \nc{\bbZ}{{\mathbb Z}}
 \nc{\bba}{{\mathbb a}}
 \nc{\bbb}{{\mathbb b}}
 \nc{\bbc}{{\mathbb c}}
 \nc{\bbd}{{\mathbb d}}
 \nc{\bbe}{{\mathbb e}}
 \nc{\bbf}{{\mathbb f}}
 \nc{\bbg}{{\mathbb g}}
 \nc{\bbh}{{\mathbb h}}
 \nc{\bbi}{{\mathbb i}}
 \nc{\bbk}{{\mathbb k}}
 \nc{\bbl}{{\mathbb l}}
 \nc{\bbm}{{\mathbb m}}
 \nc{\bbn}{{\mathbb n}}
 \nc{\bbo}{{\mathbb o}}
 \nc{\bbp}{{\mathbb p}}
 \nc{\bbq}{{\mathbb q}}
 \nc{\bbr}{{\mathbb r}}
 \nc{\bbs}{{\mathbb s}}
 \nc{\bbt}{{\mathbb t}}
 \nc{\bbu}{{\mathbb u}}
 \nc{\bbv}{{\mathbb v}}
 \nc{\bbw}{{\mathbb w}}
 \nc{\bbx}{{\mathbb x}}
 \nc{\bby}{{\mathbb y}}
 \nc{\bbz}{{\mathbb z}}
 \nc{\calA}{{\mathcal A}}
 \nc{\calB}{{\mathcal B}}
 \nc{\calC}{{\mathcal C}}
 \nc{\calD}{{\mathcal D}}
 \nc{\calE}{{\mathcal E}}
 \nc{\calF}{{\mathcal F}}
 \nc{\calG}{{\mathcal G}}
 \nc{\calH}{{\mathcal H}}
 \nc{\calI}{{\mathcal I}}
 \nc{\calJ}{{\mathcal J}}
 \nc{\calK}{{\mathcal K}}
 \nc{\calL}{{\mathcal L}}
 \nc{\calM}{{\mathcal M}}
 \nc{\calN}{{\mathcal N}}
 \nc{\calO}{{\mathcal O}}
 \nc{\calP}{{\mathcal P}}
 \nc{\calQ}{{\mathcal Q}}
 \nc{\calR}{{\mathcal R}}
 \nc{\calS}{{\mathcal S}}
 \nc{\calT}{{\mathcal T}}
 \nc{\calU}{{\mathcal U}}
 \nc{\calV}{{\mathcal V}}
 \nc{\calW}{{\mathcal W}}
 \nc{\calX}{{\mathcal X}}
 \nc{\calY}{{\mathcal Y}}
 \nc{\calZ}{{\mathcal Z}}
  \nc{\cala}{{\mathcal a}}
 \nc{\calb}{{\mathcal b}}
 \nc{\calc}{{\mathcal c}}
 \nc{\cald}{{\mathcal d}}
 \nc{\cale}{{\mathcal e}}
 \nc{\calf}{{\mathcal f}}
 \nc{\calg}{{\mathcal g}}
 \nc{\calh}{{\mathcal h}}
 \nc{\cali}{{\mathcal i}}
 \nc{\calj}{{\mathcal j}}
 \nc{\calk}{{\mathcal k}}
 \nc{\call}{{\mathcal l}}
 \nc{\calm}{{\mathcal m}}
 \nc{\caln}{{\mathcal n}}
 \nc{\calo}{{\mathcal o}}
 \nc{\calp}{{\mathsf p}}
 \nc{\calq}{{\mathcal q}}
 \nc{\calr}{{\mathcal r}}
 \nc{\cals}{{\mathcal s}}
 \nc{\calt}{{\mathcal t}}
 \nc{\calu}{{\mathcal u}}
 \nc{\calv}{{\mathcal v}}
 \nc{\calw}{{\mathcal w}}
 \nc{\calx}{{\mathcal x}}
 \nc{\caly}{{\mathcal y}}
 \nc{\calz}{{\mathcal z}}
 \nc{\frakA}{{\mathfrak A}}
 \nc{\frakB}{{\mathfrak B}}
 \nc{\frakC}{{\mathfrak C}}
 \nc{\frakD}{{\mathfrak D}}
 \nc{\frakE}{{\mathfrak E}}
 \nc{\frakF}{{\mathfrak F}}
 \nc{\frakG}{{\mathfrak G}}
 \nc{\frakH}{{\mathfrak H}}
 \nc{\frakI}{{\mathfrak I}}
 \nc{\frakJ}{{\mathfrak J}}
 \nc{\frakK}{{\mathfrak K}}
 \nc{\frakL}{{\mathfrak L}}
 \nc{\frakM}{{\mathfrak M}}
 \nc{\frakN}{{\mathfrak N}}
 \nc{\frakO}{{\mathfrak O}}
 \nc{\frakP}{{\mathfrak P}}
 \nc{\frakQ}{{\mathfrak Q}}
 \nc{\frakR}{{\mathfrak R}}
 \nc{\frakS}{{\mathfrak S}}
 \nc{\frakT}{{\mathfrak T}}
 \nc{\frakU}{{\mathfrak U}}
 \nc{\frakV}{{\mathfrak V}}
 \nc{\frakW}{{\mathfrak W}}
 \nc{\frakX}{{\mathfrak X}}
 \nc{\frakY}{{\mathfrak Y}}
 \nc{\frakZ}{{\mathfrak Z}}
 \nc{\fraka}{{\mathfrak a}}
 \nc{\frakb}{{\mathfrak b}}
 \nc{\frakc}{{\mathfrak c}}
 \nc{\frakd}{{\mathfrak d}}
 \nc{\frake}{{\mathfrak e}}
 \nc{\frakf}{{\mathfrak f}}
 \nc{\frakg}{{\mathfrak g}}
 \nc{\frakh}{{\mathfrak h}}
 \nc{\fraki}{{\mathfrak i}}
 \nc{\frakj}{{\mathfrak j}}
 \nc{\frakk}{{\mathfrak k}}
 \nc{\frakl}{{\mathfrak l}}
 \nc{\frakm}{{\mathfrak m}}
 \nc{\frakn}{{\mathfrak n}}
 \nc{\frako}{{\mathfrak o}}
 \nc{\frakp}{{\mathfrak p}}
 \nc{\frakq}{{\mathfrak q}}
 \nc{\frakr}{{\mathfrak r}}
 \nc{\fraks}{{\mathfrak s}}
 \nc{\frakt}{{\mathfrak t}}
 \nc{\fraku}{{\mathfrak u}}
 \nc{\frakv}{{\mathfrak v}}
 \nc{\frakw}{{\mathfrak w}}
 \nc{\frakx}{{\mathfrak x}}
 \nc{\fraky}{{\mathfrak y}}
 \nc{\frakz}{{\mathfrak z}}
 \nc{\sha}{{\mbox{\cyr x}}}
\begin{document}

\title[Colored Tornheim's double series]
{A Note on colored Tornheim's double series}

\author{Jianqiang Zhao}

\maketitle

\begin{center}
Department of Mathematics, Eckerd College, St. Petersburg, FL 33711, USA\\
  Max-Planck Institut f\"ur Mathematik, Vivatsgasse 7, 53111 Bonn, Germany
\end{center}

\medskip
\noindent{\small {\bf Abstract.}
In this short note, we provide an explicit formula to compute every colored
double Tornheim's series by using double polylogarithm values at
roots of unity. When the colors are given by $\pm 1$ our formula is
different from that of Tsumura \cite{Ts1} even though numerical data
confirm both are correct in almost all the cases. This agreement can
also be checked rigorously by using regularized double shuffle relations
of the alternating double zeta values in weights less than eight.

\medskip
\noindent
{\bf Mathematics Subject Classification}: 11M41, 11M06.


\vskip0.7cm

Recently, by using analytic method Tsumura \cite{Ts1,Ts2} studied
the alternating analogues of Tornheim's double series. These
series are the special cases of the so called colored
Mordell-Tornheim zeta values defined in \cite{ZZ}. In depth two,
it has the following form: for any $N$th roots of unity $\ga$ and $\gb$
\begin{equation}\label{equ:claim}
 \zeta_\MT(p,q,r;\ga,\gb ):= \sum_{m,n=1}^\infty
   \frac{ \ga^n\gb^{m+n}}
    {m^p n^q(m+n)^r}
\end{equation}
where $p,q,r\in \Z_{\ge 0}$ such that $p+r,q+r>1$
and $p+q+r>2$. Tsumura's main result gives a formula for \eqref{equ:claim}
when $N=2$, $p,q,r$ are positive and the weight $p+q+r$ is odd.

In \cite{ZZ} the author and Zhou showed that every colored
Mordell-Tornheim zeta values is a $\Q$-linear combination of
colored multiple zeta values (i.e. multiple polylogarithm values
at roots of unity) of the same weight and same depth
(see Thm.~3.2 of loc. cit.) although no explicit formula is given
because the proof there depends on an induction process. Nonetheless,
in small depths it is possible to derive such explicit formulas.
For example, in depth two we have
\begin{thm}\label{thm:main}
Let $p,q,r\in \Z_{\ge 0}$ such that $p+q>0,p+r,q+r>1$
and $p+q+r>2$. Let $\ga,\gb$ be two $N$th roots of unity.
For any positive integers $s$ and $t$ we define
the double polylogarithm value
$Li_{s,t}(\gb,\ga)=\sum_{m>n\ge 1}\gb^m\ga^n/(m^s n^t)$. Then
\begin{equation}\label{equ:thm}
 \zeta_\MT(p,q,r;\ga,\gb )=
\sum_{a=0}^{p-1}  {q+a-1\choose a}Li_{r+q+a,p-a}(\ga\gb, \ga^{-1})
+\sum_{b=0}^{q-1} {p+b-1\choose b}Li_{r+p+b,q-b}(\gb,\ga ).
\end{equation}
\end{thm}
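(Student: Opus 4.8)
The plan is to reduce the statement to a single algebraic partial fraction identity followed by a change of summation variable. The engine of the proof is the decomposition
\begin{equation}\label{equ:pf}
\frac{1}{m^p n^q}=\sum_{a=0}^{p-1}\binom{q+a-1}{a}\frac{1}{(m+n)^{q+a}\,m^{p-a}}
+\sum_{b=0}^{q-1}\binom{p+b-1}{b}\frac{1}{(m+n)^{p+b}\,n^{q-b}},
\end{equation}
valid for all $p,q\in\Z_{\ge 0}$ with $p+q>0$, where I use the convention $\binom{-1}{0}=1$ so that the degenerate cases $p=0$ and $q=0$ collapse to $1/n^q$ and $1/m^p$ respectively. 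First I would prove \eqref{equ:pf} by induction on $p+q$: the cases $p=0$ or $q=0$ are immediate, and the inductive step follows from the elementary recursion $m^{-p}n^{-q}=(m+n)^{-1}\bigl(m^{-p}n^{-(q-1)}+m^{-(p-1)}n^{-q}\bigr)$ combined with Pascal's rule for the binomial coefficients.

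Granting \eqref{equ:pf}, I would multiply both sides by $(m+n)^{-r}\ga^n\gb^{m+n}$ and sum over $m,n\ge 1$ to obtain
\begin{equation*}
\zeta_\MT(p,q,r;\ga,\gb)=\sum_{a=0}^{p-1}\binom{q+a-1}{a}S_a+\sum_{b=0}^{q-1}\binom{p+b-1}{b}T_b,
\end{equation*}
where $S_a=\sum_{m,n\ge1}\ga^n\gb^{m+n}(m+n)^{-(r+q+a)}m^{-(p-a)}$ and $T_b=\sum_{m,n\ge1}\ga^n\gb^{m+n}(m+n)^{-(r+p+b)}n^{-(q-b)}$. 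To interchange the finite outer sums with the double series I must check absolute convergence of each piece: since $|\ga|=|\gb|=1$ and the hypotheses $q+r>1$, $p+r>1$ force each first index to satisfy $r+q+a\ge r+q\ge 2$ and $r+p+b\ge r+p\ge 2$, while each companion index $p-a\ge1$ and $q-b\ge1$ is positive, every $S_a$ and $T_b$ is an absolutely convergent double series, which legitimizes the rearrangement.

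The last step is the substitution $M=m+n$ in each inner sum. In $T_b$, holding $n$ fixed gives the range $M>n\ge 1$ and $\ga^n\gb^{m+n}=\gb^M\ga^n$, so $T_b=Li_{r+p+b,\,q-b}(\gb,\ga)$. In $S_a$, holding $m$ fixed gives $M>m\ge 1$ and $\ga^n\gb^{m+n}=\ga^{M-m}\gb^{M}=(\ga\gb)^M(\ga^{-1})^m$, so $S_a=Li_{r+q+a,\,p-a}(\ga\gb,\ga^{-1})$. Inserting these two evaluations into the previous display produces exactly \eqref{equ:thm}.

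The only genuinely substantive point is the identity \eqref{equ:pf} with its precise binomial weights; everything afterward is bookkeeping driven by the change of variables. The remaining care is to confirm admissibility of the resulting double polylogarithms — each lower index $p-a$ and $q-b$ is a positive integer and each upper index exceeds $1$, so every $Li$ appearing converges — and to check that the boundary cases $p=0$ or $q=0$ are handled consistently, the empty outer sum in \eqref{equ:thm} matching the corresponding degenerate form of \eqref{equ:pf} under the convention $\binom{-1}{0}=1$.
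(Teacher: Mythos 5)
Your proof is correct and follows essentially the same route as the paper: the identical partial-fraction decomposition of $1/(m^p n^q)$ (which the paper simply cites from Nielsen rather than proving), followed by summation against $\ga^n\gb^{m+n}(m+n)^{-r}$ and the substitution $M=m+n$. The extra material you supply — the inductive proof of the identity, the absolute-convergence justification for rearranging, and the $\binom{-1}{0}=1$ convention handling the degenerate cases $p=0$ or $q=0$ (which the cited identity, stated only for positive $p,q$, does not literally cover) — merely makes the same argument more self-contained.
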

\begin{proof}
We have the following well-known combinatorial identity
(for e.g., see \cite[p.~48]{Nie}):
$$ \frac{1}{x^py^q} =
\sum_{a=0}^{p-1}  {q+a-1\choose a}\frac{1}{x^{p-a}(x+y)^{q+a}}
+\sum_{b=0}^{q-1} {p+b-1\choose b}\frac{1}{y^{q-b}(x+y)^{p+b}}$$
for any two positive integers $p$ and $q$ and any two
real numbers $x$ and $y$ such that $x+y\ne 0$.
This immediately yields the decomposition
\begin{equation*}
 \zeta_\MT(p,q,r;\ga,\gb )=
 \sum_{a=0}^{p-1}  {q+a-1\choose a}\frac{\ga^n\gb^{m+n}}{m^{p-a}(m+n)^{r+q+a}}
+\sum_{b=0}^{q-1} {p+b-1\choose b}\frac{\ga^n\gb^{m+n}}{n^{q-b}(m+n)^{r+p+b}}
\end{equation*}
which gives \eqref{equ:thm}, as desired.
\end{proof}
In \cite{Ts2} Tsumura defines
\begin{equation*}
 R(p,q,r):=\sum_{m,n=1}^\infty   \frac{(-1)^{n}}{m^p n^q(m+n)^r},\qquad
 S(p,q,r):=\sum_{m,n=1}^\infty   \frac{(-1)^{m+n}}{m^p n^q(m+n)^r},
\end{equation*}
which are two special cases of $\zeta_\MT(p,q,r;\ga,\gb )$ when $N=2$.
Define
$$\zeta(\ol{p},\ol{q} ):=\sum_{m>n\ge 1}  \frac{(-1)^{m+n}}{m^p n^q},\quad
\zeta(\ol{p},q ):=\sum_{m>n\ge 1}  \frac{(-1)^{m}}{m^p n^q},\quad
\zeta(p,\ol{q} ):=\sum_{m>n\ge 1}  \frac{(-1)^n}{m^p n^q}.$$
Then the next corollary follows from Theorem \ref{thm:main} at once.
\begin{cor}\label{cor:main}
Let $p,q,r\in \Z_{\ge 0}$  such that $p+q>0,p+r,q+r>1$
and $p+q+r>2$. Then
\begin{align*}
 R(p,q,r)=&
\sum_{a=0}^{p-1}  {q+a-1\choose a}\zeta(\ol{r+q+a},\ol{p-a} )
+\sum_{b=0}^{q-1} {p+b-1\choose b}\zeta(r+p+b,\ol{q-b}),\\
S(p,q,r)=&
\sum_{a=0}^{p-1}  {q+a-1\choose a}\zeta(\ol{r+q+a},p-a)
+\sum_{b=0}^{q-1} {p+b-1\choose b}\zeta(\ol{r+p+b},q-b).
\end{align*}
\end{cor}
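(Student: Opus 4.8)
The plan is to obtain both identities simply by specializing Theorem~\ref{thm:main} to the case $N=2$ and then re-expressing the resulting double polylogarithm values via the definitions of the alternating double zeta values. The first step is to pin down which pair of square roots of unity $\ga,\gb\in\{\pm1\}$ produces $R$ and which produces $S$. Since the summand of $\zeta_\MT(p,q,r;\ga,\gb)$ carries the numerator $\ga^n\gb^{m+n}$, I would compare this with the numerators appearing in $R$ and $S$. Matching $\ga^n\gb^{m+n}=(-1)^n$ forces $\gb=1$ and $\ga=-1$, while matching $\ga^n\gb^{m+n}=(-1)^{m+n}$ forces $\ga=1$ and $\gb=-1$. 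Hence $R(p,q,r)=\zeta_\MT(p,q,r;-1,1)$ and $S(p,q,r)=\zeta_\MT(p,q,r;1,-1)$, and the hypotheses on $p,q,r$ are exactly those of Theorem~\ref{thm:main}.

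Next I would substitute each choice into \eqref{equ:thm} and read off the two families of double polylogarithm values, using $Li_{s,t}(\gb,\ga)=\sum_{m>n\ge1}\gb^m\ga^n/(m^s n^t)$. For $R$ we have $\ga=-1$, $\gb=1$, so $\ga\gb=-1$ and $\ga^{-1}=-1$; the first family gives $Li_{r+q+a,p-a}(-1,-1)=\sum_{m>n\ge1}(-1)^{m+n}/(m^{r+q+a}n^{p-a})=\zeta(\ol{r+q+a},\ol{p-a})$, while the second family gives $Li_{r+p+b,q-b}(1,-1)=\sum_{m>n\ge1}(-1)^n/(m^{r+p+b}n^{q-b})=\zeta(r+p+b,\ol{q-b})$. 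For $S$ we have $\ga=1$, $\gb=-1$, so $\ga\gb=-1$ and $\ga^{-1}=1$; the first family gives $Li_{r+q+a,p-a}(-1,1)=\sum(-1)^m/(m^{r+q+a}n^{p-a})=\zeta(\ol{r+q+a},p-a)$, and the second gives $Li_{r+p+b,q-b}(-1,1)=\zeta(\ol{r+p+b},q-b)$. Reassembling the two sums with the same binomial coefficients $\binom{q+a-1}{a}$ and $\binom{p+b-1}{b}$ reproduces precisely the two displayed formulas.

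There is no genuine obstacle here, since the statement is an immediate corollary of Theorem~\ref{thm:main}; the only point requiring care is the bookkeeping of signs, namely tracking whether the outer summation index $m$ or the inner index $n$ receives the alternating factor. This is controlled entirely by the convention $Li_{s,t}(\gb,\ga)=\sum_{m>n\ge1}\gb^m\ga^n/(m^s n^t)$ together with the definitions of $\zeta(\ol{p},\ol{q})$, $\zeta(\ol{p},q)$ and $\zeta(p,\ol{q})$: a factor $\gb=-1$ places a bar on the outer variable and a factor $\ga=-1$ places a bar on the inner variable, which is exactly how the bars distribute in the claimed identities.
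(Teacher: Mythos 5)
Your proposal is correct and follows exactly the route the paper intends: the paper states that the corollary ``follows from Theorem~\ref{thm:main} at once,'' and your argument is precisely that specialization, with the identifications $R(p,q,r)=\zeta_\MT(p,q,r;-1,1)$ and $S(p,q,r)=\zeta_\MT(p,q,r;1,-1)$ and the correct translation of $Li_{s,t}(\gb,\ga)$ into the barred zeta notation. All sign bookkeeping checks out, so your write-up simply makes explicit what the paper leaves implicit.
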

For example, we have
\begin{align*}
R(1,1,3)=&\gz(\ol4,\ol1)+\gz(4,\ol1),   \quad
R(1,2,2)= \gz(\ol4,\ol1)+\gz(3,\ol2)+\gz(4,\ol1),\quad
R(1,1,5)= \gz(\ol6,\ol1)+\gz(6,\ol1),\\
R(2,1,2)=&\gz(\ol3,\ol2)+\gz(\ol4,\ol1)+\gz(4,\ol1),   \quad
R(2,3,2)=\gz(\ol5,\ol2)+3\gz(\ol6,\ol1)+\gz(4,\ol3)+2\gz(5,\ol2)+3\gz(6,\ol1),\\  R(1,2,4)=&\gz(\ol6,\ol1)+\gz(5,\ol2)+\gz(6,\ol1),\quad
R(1,3,3)= \gz(\ol6,\ol1)+\gz(4,\ol3)+\gz(5,\ol2)+\gz(6,\ol1),\\  R(2,1,4)=&\gz(\ol5,\ol2)+\gz(\ol6,\ol1)+\gz(6,\ol1),\quad
R(1,4,2)= \gz(\ol6,\ol1)+\gz(3,\ol4)+\gz(4,\ol3)+\gz(5,\ol2)+\gz(6,\ol1),\\  R(2,2,3)=&\gz(\ol5,\ol2)+2\gz(\ol6,\ol1)+\gz(5,\ol2)+2\gz(6,\ol1), \quad R(3,1,3)=\gz(\ol4,\ol3)+\gz(\ol5,\ol2)+\gz(\ol6,\ol1)+\gz(6,\ol1),\\  R(3,2,2)=&\gz(\ol4,\ol3)+2\gz(\ol5,\ol2)+3\gz(\ol6,\ol1)+\gz(5,\ol2)+3\gz(6,\ol1),\\  R(4,1,2)=&\gz(\ol3,\ol4)+\gz(\ol4,\ol3)+\gz(\ol5,\ol2)+\gz(\ol6,\ol1)+\gz(6,\ol1).
\end{align*}
Recently Bl\"umlein, Broadhurst and Vermaseren \cite{BBV} have
built tables of relations for (alternating) multiple zeta values
and so one can rigorously check if the above data agree
with those in \cite{Ts2} or not.
When the weight is $\le 7$ these relations can be produced by
using regularized double shuffle relations
(see \cite{Rac} or \cite{Zpolrel}).
We also have verified this agreement numerically by EZface \cite{EZface}
except that $R(2,1,2)=-.2402184755\cdots$ by our formula while
$R(2,1,2)=\frac{45}{16}\gz(5)-\frac14\pi^2 \gz(3)=-.0495972141\cdots$
by the second line on \cite[p.~90]{Ts2}. With Maple it is easy to compute
this value directly by the series definition and see that our value is correct.
In fact, by using regularized double shuffle relations we get
$R(2,1,2)=\gz(\ol3,\ol2)+\gz(\ol4,\ol1)+\gz(4,\ol1)=
\frac{107}{32}\gz(5)-\frac{5}{16}\pi^2\gz(3).$
\medskip

\noindent
\textbf{Acknowledgement.} The author wishes to thank Vermaseren
for verifying the identities involving alternating double zeta values
in this note. He also thanks
Max-Planck-Institut f\"ur Mathematik for providing financial support
during his sabbatical leave when this work was done.

\end{document}